\newcommand \N {\mathbb{N}}
\newcommand \R {\mathbb{R}}
\newcommand \Sc {\mathcal{S}}
\newcommand \Oh {\mathcal{O}}
\newcommand \la {\langle}
\newcommand \ra {\rangle}
\newcommand \eps {\varepsilon}
\newcommand \Def {\stackrel{\textrm{def}}=}
\DeclareMathOperator \re {Re}
\DeclareMathOperator \im {Im}
\DeclareMathOperator \supp {supp}
\DeclareMathOperator \WF {WF}
\DeclareMathOperator \WFh {WF_{\textit{h}}}
\DeclareMathOperator \Op {Op}
\newtheorem{thm}{Theorem}
\theoremstyle{definition}
\numberwithin{lem}{section}
\numberwithin{Defn}{section}
\title
{Semiclassical resolvent estimates at trapped sets}
\author[Kiril Datchev]
{Kiril Datchev}
\address{Department of Mathematics, Massachusetts Institute of Technology, Cambridge, MA
02139-4397, U.S.A.}
\email{datchev@math.mit.edu}
\author[Andr\'as Vasy]
{Andr\'as Vasy}
\address{Department of Mathematics, Stanford University, Stanford, CA
94305-2125, U.S.A.}
\email{andras@math.stanford.edu}
\keywords{Resolvent estimates, trapping, propagation
of singularities}
\subjclass[2010]{58J47, 35L05}
\thanks{The first author is partially supported by a National Science Foundation postdoctoral fellowship, and the second author is partially supported by the National Science Foundation under
grants DMS-0801226 and DMS-1068742.}
\date{May 29, 2012}
\begin{document}

\begin{abstract}
We extend our recent results on propagation of semiclassical resolvent estimates through trapped sets when a priori polynomial resolvent bounds hold. Previously we obtained non-trapping estimates in trapping situations when the resolvent was  sandwiched between cutoffs $\chi$ microlocally supported away from the trapping: $\|\chi R_h(E+i0)\chi\| = \Oh(h^{-1})$, a microlocal version of a result of Burq and Cardoso-Vodev. We now allow one of the two cutoffs, $\tilde\chi$, to be supported at the trapped set, giving $\|\chi R_h(E+i0)\tilde\chi\| = \Oh(\sqrt{a(h)}h^{-1})$ when the a priori bound is $\|\tilde \chi R_h(E+i0)\tilde\chi\| = \Oh(a(h)h^{-1})$.
\end{abstract}

\maketitle

In this brief article we  extend the resolvent and propagation
estimates of \cite{Datchev-Vasy:Trapped}.

Let $(X,g)$ be a Riemannian manifold which is asymptotically conic or
asymptotically hyperbolic in the sense of \cite{Datchev-Vasy:Trapped},
let $V \in C_0^\infty(X)$ be real valued, let $P = h^2\Delta_g + V(x)$, where $\Delta_g\ge0$, and fix  $E>0$.

\begin{thm} \cite[Theorem 1.2]{Datchev-Vasy:Trapped}\label{t:first}  Suppose that for any $\chi_0 \in C_0^\infty(X)$ there exist $C_0, k, h_0 > 0$ such that for any $\eps > 0$, $h \in (0,h_0]$ we have
\begin{equation}\label{e:polybd}
\|\chi_0(h^2\Delta_g + V - E - i\eps)^{-1}\chi_0\|_{L^2(X) \to L^2(X)} \le C_0 h^{-k}.
\end{equation}
Let $K_E \subset T^*X$ be the set of trapped bicharacteristics at energy $E$, and suppose that $b \in C_0^\infty(T^*X)$ is identically $1$ near $K_E$. Then there exist $C_1, h_1>0$ such that for any $\eps > 0$, $h \in (0,h_1]$ we have the following nontrapping estimate:
\begin{equation}\label{e:firstconc}
\|\la r \ra^{-1/2 - \delta}(1 - \Op(b))(h^2\Delta_g + V - E - i\eps)^{-1}(1 - \Op(b))\la r \ra^{-1/2 - \delta}\|_{L^2(X) \to L^2(X)} \le C_1 h^{-1}.
\end{equation}
\end{thm}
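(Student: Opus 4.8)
The plan is to reduce \eqref{e:firstconc} to a globally non-trapping estimate by complex absorption, and then to excise the trapping using \eqref{e:polybd}, the decisive point being that the cutoffs $1 - \Op(b)$ are microlocal and supported away from $K_E$, not merely spatial. To that end, choose $Q = \Op(q)$ with $q \ge 0$, with $\WFh(Q)$ contained in the open set where $b \equiv 1$, and with $q$ elliptic on a neighborhood of $K_E$; put $P_\flat = P - iQ$, $R_\flat = (P_\flat - E - i\eps)^{-1}$. Since $K_E$ is exactly the trapped set at energy $E$, every null-bicharacteristic of $p - E$ missing the elliptic set of $q$ escapes to infinity forwards and backwards, so $P_\flat$ is non-trapping; combining the interior positive-commutator estimate with the radial-point (asymptotically hyperbolic) or scattering (asymptotically conic) estimates at infinity, as in the non-trapping analysis of \cite{Datchev-Vasy:Trapped}, yields $\|\la r \ra^{-1/2-\delta} R_\flat \la r \ra^{-1/2-\delta}\| = \Oh(h^{-1})$ uniformly for $\eps \ge 0$, and from the identity $-\im \la (P_\flat - E - i\eps) w, w \ra = \|Q^{1/2} w\|^2 + \eps \|w\|^2$ together with Cauchy--Schwarz the complex-absorption bounds $\|Q^{1/2} R_\flat \la r \ra^{-1/2-\delta}\| = \Oh(h^{-1/2})$, $\|\la r \ra^{-1/2-\delta} R_\flat Q^{1/2}\| = \Oh(h^{-1/2})$, and $\|Q^{1/2} R_\flat Q^{1/2}\| = \Oh(1)$. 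A preliminary use of the same estimates at infinity upgrades \eqref{e:polybd} to a global weighted bound $\|\la r \ra^{-1/2-\delta}(P - E - i\eps)^{-1}\la r \ra^{-1/2-\delta}\| = \Oh(h^{-N_0})$ for some $N_0$, uniformly in $\eps > 0$, which will be needed only to absorb $\Oh(h^\infty)$ errors that get composed with the resolvent.

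Next, since $P = P_\flat + iQ$, one has $R = R_\flat - iR_\flat Q R = R_\flat - i R Q R_\flat$, and substituting one into the other, $R = R_\flat - i R_\flat Q R_\flat - R_\flat Q R Q R_\flat$. Writing $\chi = \la r \ra^{-1/2-\delta}$ and conjugating by $\chi(1 - \Op(b))$: the first term is $\Oh(h^{-1})$ by the reference bound, while the second equals $\bigl(\chi(1-\Op(b))R_\flat Q^{1/2}\bigr)\bigl(Q^{1/2} R_\flat(1-\Op(b))\chi\bigr)$, hence is $\Oh(h^{-1/2}) \cdot \Oh(h^{-1/2}) = \Oh(h^{-1})$. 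Thus \eqref{e:firstconc} is reduced to showing $\|\chi(1-\Op(b)) R_\flat Q R Q R_\flat (1-\Op(b))\chi\| = \Oh(h^{-1})$.

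This last bound is the main obstacle, and it is where the microlocality of $b$ must be used: simply inserting \eqref{e:polybd} to estimate the central factor $Q^{1/2} R Q^{1/2}$ costs $h^{-N_0}$, and this loss must not be allowed to propagate to \eqref{e:firstconc}. One route is to exploit that $\WFh(1-\Op(b))$ and $\WFh(Q)$ are disjoint, so that $(1-\Op(b))R_\flat Q$ sees only bicharacteristics joining a neighborhood of $K_E$ to the region $\{b < 1\}$; since $K_E$ is flow-invariant and $q$ is elliptic near $K_E$, a careful choice of $Q$ forces such bicharacteristics to spend a definite amount of time in the elliptic set of the absorption, giving a gain over the $\Oh(h^{-1/2})$ bound above that is enough to kill the $h^{-N_0}$. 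A more robust route is a direct positive-commutator argument for $u = R\bigl(\chi(1-\Op(b))g\bigr)$: the source $v = \chi(1-\Op(b))g$ is microlocalized where $b < 1$, and because $K_E$ is flow-invariant the forward flow-out of $\WFh(v)$ reaches $\WFh(1-\Op(b))$ only along bicharacteristics that either run between two points of $\{b < 1\}$ in bounded time or escape to infinity --- never ones ``emitted'' by $K_E$ --- so an escape function can be built on $\{b < 1\}$ that degenerates only as $K_E$ is approached, and \eqref{e:polybd} is then invoked only to dominate finitely many error terms microlocalized near $K_E$, the commutator structure together with the disjointness of the wavefront sets supplying the extra powers of $h$ needed to absorb $h^{-N_0}$. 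Checking that every occurrence of the a priori bound is accompanied by such a gain, so that the polynomial loss genuinely does not propagate, is the technical heart of the proof.
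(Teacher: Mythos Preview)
Your overall architecture --- reduce to a non-trapping model via complex absorption, then use a resolvent identity and the a~priori bound \eqref{e:polybd} to remove the absorption --- is reasonable, and the estimates $\|\chi R_\flat \chi\|=\Oh(h^{-1})$, $\|Q^{1/2}R_\flat\chi\|=\Oh(h^{-1/2})$, $\|Q^{1/2}R_\flat Q^{1/2}\|=\Oh(1)$ are correct and useful. But the decisive step, controlling $\chi(1-\Op(b))R_\flat Q\,R\,Q R_\flat(1-\Op(b))\chi$, is not actually carried out, and the mechanism you propose for it does not work.

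In route~1 you claim that disjointness of $\WFh(1-\Op(b))$ and $\WFh(Q)$, together with the fact that connecting bicharacteristics spend ``a definite amount of time'' in the absorption, yields a gain over $\Oh(h^{-1/2})$ sufficient to kill $h^{-N_0}$. This is not so. Take $\rho\in\Gamma_+$ just outside $\{b=1\}$ and $\sigma\in\supp q\cap\Gamma_+$ close to $\partial(\supp q)$ on the same bicharacteristic: the forward orbit from $\sigma$ reaches $\rho$ after spending only a \emph{bounded}, $h$-independent time in $\{q>c\}$, so the absorption gives at best a fixed finite improvement, not $\Oh(h^M)$ for all $M$. Iterating the identity does not help either: each factor $Q^{1/2}R_\flat Q^{1/2}$ is only $\Oh(1)$, not $<1$, so no geometric decay is produced. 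Nor can one get $QR_\flat(1-\Op(b))\chi=\Oh(h^\infty)$ from ellipticity of $P_\flat-\lambda$ on $\{q>0\}$, because $\WFh(Q)=\supp q$ reaches the boundary $\{q=0\}$ where the elliptic constant degenerates.

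Route~2 is closer to what is actually done, but the heuristic you give is misleading: for $\rho\in\Gamma_+\cap\{b<1\}$ the backward bicharacteristic does \emph{not} ``run between two points of $\{b<1\}$ in bounded time'' --- it tends to $K_E$, and ordinary propagation of singularities gives no control there beyond the a~priori $\Oh(h^{-k})$. The whole content of the theorem is that this loss does \emph{not} propagate out along $\Gamma_+$, and establishing this is exactly what you defer with ``checking that every occurrence of the a~priori bound is accompanied by such a gain \dots\ is the technical heart of the proof.''

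The paper's route is to reduce Theorem~\ref{t:first} to the microlocal propagation result Theorem~\ref{thm:through-trapped}: for $u=R(1-\Op(b))\chi g$ the source vanishes microlocally on a neighborhood $\overline V$ of $K_E$, and $u=\Oh(h^{-1})$ on $\Gamma_-\setminus\overline V$ by ordinary (incoming) propagation; Theorem~\ref{thm:through-trapped} then gives $u=\Oh(h^{-1})$ on $\Gamma_+\setminus K_E$. That theorem is proved by constructing an escape function $q$ with $q\equiv1$ near $K_E$, $H_pq\le0$ near $\Gamma_+$, and $H_pq<0$ on the part of $\Gamma_+$ one wants to control (this is \cite[Lemma~4.3]{Datchev-Vasy:Trapped}), and running an \emph{inductive} positive-commutator argument: the commutator $\tfrac{i}{h}[P,Q^*Q]$ produces a main term $-B^*B$ elliptic on the target set and error terms microsupported on $\supp dq\subset U\setminus K_E$, so that the a~priori $\Oh(h^{-k})$ bound is used only on $\supp dq$ and is accompanied by the extra factor of $h$ from the commutator; iterating improves $\Oh(h^{-k})$ to $\Oh(h^{-1})$ on $\Gamma_+\setminus K_E$ in finitely many steps. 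This construction of $q$ --- monotone along the flow near $\Gamma_+$ yet identically~$1$ at $K_E$ --- is the missing idea in your sketch.
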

Here by bicharacteristics at energy $E$ we mean integral curves in $p^{-1}(E)$ of the Hamiltonian vector field $H_p$ of the Hamiltonian $p = |\xi|^2 + V(x)$, and the trapped ones are those which remain in a compact set for all time. We use the notation $r = r(z) = d_g(z,z_0)$, where $d_g$ is the distance function on $X$ induced by $g$ and $z_0 \in X$ is fixed but arbitrary.

If $K_E = \varnothing$ then \eqref{e:polybd} holds with $k=1$. If $K_E \ne \varnothing$ but the trapping is sufficiently `mild', then \eqref{e:polybd} holds for some $k>1$: see \cite{Datchev-Vasy:Trapped} for details and examples. The point is that the losses in \eqref{e:polybd} due to trapping are removed when the resolvent is cutoff away from $K_E$.
Theorem~\ref{t:first}  is a more precise and microlocal version of an earlier result of  Burq \cite{Burq:Lower} and Cardoso and Vodev \cite{Cardoso-Vodev:Uniform}, but the assumption \eqref{e:polybd} is not needed in \cite{Burq:Lower, Cardoso-Vodev:Uniform}. See  \cite{Datchev-Vasy:Trapped} for additional background and references for semiclassical resolvent estimates and trapping.

In this paper we prove that an improvement over the a priori estimate \eqref{e:polybd} holds even when one of the factors of $(1 - \Op(b))$ is removed:

\begin{thm}\label{t:first2} Suppose that there exist $k >0$ and $a(h) \le h^{-k}$ such that for any $\chi_0 \in C_0^\infty(X)$ there exists $h_0 > 0$ such that for any $\eps > 0$, $h \in (0,h_0]$ we have
\begin{equation}\label{e:polybd2}
\|\chi_0(h^2\Delta_g + V - E - i\eps)^{-1}\chi_0\|_{L^2(X) \to L^2(X)} \le a(h)/h.
\end{equation}
Suppose that $b\in C_0^\infty(T^*X)$ is identically $1$ near $K_E$. Then there exist $C_1, h_1>0$ such that for any $\eps > 0$, $h \in (0,h_1]$,
\begin{equation}\label{e:firstconc2}
\|\la r \ra^{-1/2 - \delta}(1 - \Op(b))(h^2\Delta_g + V - E - i\eps)^{-1}\la r \ra^{-1/2 - \delta}\|_{L^2(X) \to L^2(X)} \le C_1 \sqrt{a(h)}/h.
\end{equation}
\end{thm}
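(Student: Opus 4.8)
\emph{Strategy.} Write $R=R_h(E+i\eps)=(P-E-i\eps)^{-1}$ and $s=\tfrac12+\delta$. The plan is first to peel off a ``doubly cut'' piece handled by Theorem~\ref{t:first}, and then to analyze the remaining ``cross'' term by a positive commutator argument in which the trapped set enters only once. Inserting $1=(1-\Op(b))+\Op(b)$ to the right of $R$ splits the operator in \eqref{e:firstconc2} as
\[
\langle r\rangle^{-s}(1-\Op(b))R(1-\Op(b))\langle r\rangle^{-s}\ +\ \langle r\rangle^{-s}(1-\Op(b))R\,\Op(b)\langle r\rangle^{-s}.
\]
The first summand is precisely the operator bounded in Theorem~\ref{t:first}: its hypothesis \eqref{e:polybd} follows from \eqref{e:polybd2} with exponent $k+1$ in place of $k$ (since $a(h)/h\le h^{-k-1}$), so the first summand is $\Oh(h^{-1})$, which is $\Oh(\sqrt{a(h)}\,h^{-1})$ once we note that we may assume $a(h)\ge 1$ (the a priori bound is never better than $\Oh(h^{-1})$ in this setting). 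For the cross summand, since $\Op(b)$ has compactly supported Schwartz kernel we may write $\Op(b)\langle r\rangle^{-s}=\chi_0\Op(b)\langle r\rangle^{-s}+\Oh(h^\infty)$ for a fixed $\chi_0\in C_0^\infty(X)$ equal to $1$ near the base projection of $\supp b$; absorbing the $\Oh(h^\infty)$ using a crude polynomial weighted bound on $R$ (a consequence of \eqref{e:polybd2} together with ellipticity near infinity), the whole matter reduces to proving
\[
\bigl\|\langle r\rangle^{-s}(1-\Op(b))R\,\chi_0\bigr\|_{L^2\to L^2}\ \le\ C\,\sqrt{a(h)}\,h^{-1}.
\]

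\emph{The commutator step.} Fix $v\in L^2$, set $u=R\chi_0 v$ and $w=(1-\Op(b))u$, so that $(P-E-i\eps)w=(1-\Op(b))\chi_0 v-h\Op(\gamma)u$, where $\gamma\in C_0^\infty(T^*X)$ is supported on $\supp db$, hence \emph{away} from $K_E$ (take $0\le b\le 1$). I would then run the escape-function estimate of \cite{Datchev-Vasy:Trapped}: there is $G=G^*\in\Psih$ with $\tfrac ih[P,G]\ge c\langle r\rangle^{-1-2\delta}-E_1-E_0$, with $E_1$ of subprincipal size (absorbable) and $0\le E_0\in\Psih$ microsupported near $K_E$. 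Pairing against $w$ and using crucially that $w$ is microlocalized away from $K_E$ — so that $\langle E_0 w,w\rangle=\langle(1-\Op(b))^*E_0(1-\Op(b))u,u\rangle=\Oh(h^\infty)$ — the trapped-set defect disappears, and the usual manipulation of $\langle \tfrac ih[P,G]w,w\rangle$ against $(P-E-i\eps)w$ (with the $\eps$-terms controlled as in \cite{Datchev-Vasy:Trapped}) yields
\[
\bigl\|\langle r\rangle^{-s}w\bigr\|\ \le\ Ch^{-1}\|v\|+C\bigl\|\langle r\rangle^{s}\Op(\gamma)u\bigr\|+\Oh(h^\infty)\|v\|.
\]

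\emph{The gain of $\sqrt{a(h)}$.} Everything then hinges on $\|\langle r\rangle^{s}\Op(\gamma)u\|=\|\Op(\gamma)R\chi_0 v\|$ (with $\Op(\gamma)$ compactly supported). Bounding this crudely by \eqref{e:polybd2} gives only $\Oh(a(h)h^{-1})$ — no improvement over the a priori hypothesis. The point is that, because the cutoff away from $K_E$ now sits on only one side of $R$, the trapped set enters the estimate a single time, so that the loss $a(h)$ should appear under a square root. I would exploit this by noting $\Op(\gamma)R\chi_0$ is itself of the type being estimated (writing $\Op(\gamma)=\Op(\gamma)(1-\Op(b'))+\Oh(h^\infty)$ with $b'\equiv 1$ near $K_E$ and $\supp b'\cap\supp\gamma=\varnothing$), and estimating the term by Cauchy–Schwarz so that the factor genuinely touching $K_E$ is controlled by \eqref{e:polybd2} ($\Oh(a(h)h^{-1})$) while the complementary factor carries the non-trapping gain $\Oh(h^{-1})$ of Theorem~\ref{t:first} (equivalently, of the commutator step applied with both ends away from $K_E$). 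With $N=\|\langle r\rangle^{-s}(1-\Op(b))R\chi_0\|$, this produces an inequality of the shape $N^2\le C h^{-2}+C h^{-1}\!\cdot\! a(h)h^{-1}$, hence $N\le C\sqrt{a(h)}\,h^{-1}$, which gives \eqref{e:firstconc2}.

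\emph{Main obstacle.} The genuinely delicate point is this last step: arranging the commutator estimate so that the near-$K_E$ contribution appears \emph{linearly} in the trapped part of $u$ (as a pairing bounded by $\|(\text{near }K_E)\,u\|$ times a non-trapping quantity) rather than quadratically, which is exactly what converts the naive loss $a(h)h^{-1}$ into $\sqrt{h^{-1}\!\cdot\!a(h)h^{-1}}=\sqrt{a(h)}\,h^{-1}$. Making this precise requires tracking the polynomial weights $\langle r\rangle^{\pm s}$ through the growing symbol of $G$ and the $\eps$-dependent boundary terms, and invoking \eqref{e:polybd2} exactly once, paired against a factor that carries the gain of Theorem~\ref{t:first}. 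Everything else — the splitting, the compact-support reductions, the $\Oh(h^\infty)$ errors, and the vanishing of the defect term $E_0$ against $(1-\Op(b))$ — is routine given \cite{Datchev-Vasy:Trapped}.
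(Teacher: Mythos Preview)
Your reduction to the cross term and the commutator estimate for $w=(1-\Op(b))u$ are fine, but the closure step is where the argument breaks. After the positive-commutator estimate you arrive (in quadratic form) at
\[
\|\langle r\rangle^{-s}w\|^2\ \lesssim\ h^{-2}\|v\|^2\ +\ \big|\langle \Op(\gamma)u,\ Gw\rangle\big|\ +\ (\text{errors}),
\]
and you want the middle pairing to be $\Oh(h^{-1})\cdot\Oh(a(h)h^{-1})$. But both entries of that pairing are one-sided-cutoff quantities of exactly the type $N$ you are trying to bound: $\Op(\gamma)u=\Op(\gamma)(1-\Op(b'))R\chi_0 v$ is another ``type $N$'' term, and the microlocalization of $Gw$ near $\supp\gamma$ is likewise $\Oh(N)$. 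Cauchy--Schwarz therefore gives either $N^2\lesssim h^{-2}+a(h)h^{-1}\cdot N$ (no gain: $N\lesssim a(h)h^{-1}$) or $N(b)\lesssim h^{-1}+N(b')$ with $\supp b'\subset\{b=1\}$, a bootstrap that never closes. Theorem~\ref{t:first} cannot supply the missing $\Oh(h^{-1})$ factor, because neither side of the pairing is a \emph{doubly} cut-off resolvent. The underlying reason is that passing from $u$ to $w$ has traded the $\Oh(1)$ forcing $\chi_0v$ for a forcing with an $\Oh(a(h))$ commutator piece $h\Op(\gamma)u$, and this destroys the square-root mechanism.

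The paper obtains the gain by a different placement of the commutant: one works with $u$ itself (not $w$) and a commutant $Q^*Q$ whose symbol $q$ is supported in a full neighborhood of the trapped set with $q\equiv 1$ near $\Gamma$ and $H_pq\le0$ near $\Gamma_+$. Then $\tfrac{i}{h}\langle[P,Q^*Q]u,u\rangle\ge -2h^{-1}\|Qf\|\,\|Qu\|-\Oh(h^\infty)$, and it is here that the square root appears: $\|Qf\|=\Oh(1)$ is the \emph{original} forcing, while $\|Qu\|=\Oh(a(h)h^{-1})$ is the a~priori bound near $\Gamma$, so the product is $\Oh(a(h)h^{-2})$ and bounds $\|Bu\|^2$ with $B$ elliptic on $\Gamma_+\setminus\Gamma$. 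In short, the non-trapping factor in the key pairing must be the forcing (of size $\Oh(1)$), not a piece of the solution; your scheme never produces such a pairing.
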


Note that by taking adjoints, analogous estimates follow if $1 -
\Op(b)$ is placed to the other side of $(h^2\Delta_g + V - E - i\eps)^{-1}$.

Such results were proved by Burq and Zworski \cite[Theorem A]{bz} and Christianson \cite[(1.6)]{c07} when $K_E$ consists of a single hyperbolic orbit. Theorem~\ref{t:first2} implies an optimal semiclassical resolvent estimate for the example operator of \cite[\S5.3]{Datchev-Vasy:Trapped}: it improves  \cite[(5.5)]{Datchev-Vasy:Trapped} to
\[
\|\chi_0(P-\lambda)^{-1}\chi_0\| \le C \log(1/h)/h.
\]
Further, this improved estimate can be used to extend polynomial resolvent
estimates from complex absorbing potentials to analogous estimates for
damped wave equations; this is a result of Christianson, Schenk,
Wunsch and the second author \cite{csvw}.

Theorems \ref{t:first} and \ref{t:first2} follow from microlocal propagation estimates in a neighborhood of $K_E$, or more generally in a neighborhood of a suitable compact invariant subset of a bicharacteristic flow.

To state the general results, suppose $X$ is a manifold, $P \in \Psi^{m,0}(X)$ a self adjoint, order $m>0$, semiclassical pseudodifferential operator on $X$, with principal symbol $p$. For $I \subset \R$ compact and fixed, denote the characteristic set by $\Sigma=p^{-1}(I)$, and suppose that the projection to the base, $\pi\colon\Sigma \to X$, is proper (it is sufficient, for example, to have $p$ classically elliptic). Suppose that $\Gamma\Subset T^*X$ is invariant under the bicharacteristic flow in $\Sigma$. Define the {\em forward, resp.\ backward flowout} $\Gamma_+$, resp.\ $\Gamma_-$, of $\Gamma$ as the set of points $\rho \in \Sigma$, from which the backward, resp.\ forward bicharacteristic segments tend to $\Gamma$, i.e.\ for any neighborhood $O$ of $\Gamma$ there exists $T>0$ such that $-t\geq T$, resp.\ $t\geq T$, implies $\gamma(t)\in O$, where $\gamma$ is the bicharacteristic with $\gamma(0)=\rho$. Here we think of $\Gamma$ as the trapped set or as part of the trapped set, hence points in $\Gamma_-$, resp.\ $\Gamma_+$ are backward, resp.\ forward, trapped. Suppose $V$, $W$ are neighborhoods of $\Gamma$ with $\overline{V}\subset W$, $\overline{W}$ compact. Suppose also that
\begin{equation}\label{e:dynamicassumption}\begin{split}&\textrm{If $\rho\in W \setminus \Gamma_+$, resp. $\rho\in W \setminus \Gamma_-$,}\\ \textrm{then the backward, }& \textrm{resp. forward bicharacteristic from $\rho$ intersects $W\setminus \overline{V}$.}\end{split}
\end{equation}
This means that all bicharacterstics in $V$ which stay in $V$ for all time tend to $\Gamma$.

The main result of \cite{Datchev-Vasy:Trapped}, from which the other results in the paper follow, is the following:

\begin{thm}\cite[Theorem 1.3]{Datchev-Vasy:Trapped}\label{thm:through-trapped}
Suppose that $\|u\|_{H_h^{-N}} \le h^{-N}$ for some $N \in \N$ and $(P-\lambda)u=f$, $\re \lambda \in I$ and $\im \lambda \ge -\Oh(h^\infty)$. Suppose $f$ is $\Oh(1)$  on $W$, $\WFh(f)\cap \overline{V}=\emptyset$, and $u$ is  $\Oh(h^{-1})$  on $W \cap \Gamma_- \setminus \overline{V}$. Then $u$ is $\Oh(h^{-1})$  on $W\cap \Gamma_+ \setminus \Gamma$.
\end{thm}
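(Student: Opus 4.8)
The plan is to combine an elliptic reduction, standard semiclassical propagation of singularities driven by the incoming hypothesis on $\Gamma_-\setminus\overline{V}$, and --- as the new ingredient --- a positive commutator estimate microlocalized near $\Gamma$ that exploits $\WFh(f)\cap\overline{V}=\emptyset$, so that $u$ is a genuine microlocal quasimode there. Two preliminary reductions are immediate. Since $P-\lambda$ is elliptic off $\Sigma$, the hypotheses that $f$ is $\Oh(1)$ on $W$ and $\|u\|_{H_h^{-N}}\le h^{-N}$ give, by the elliptic parametrix estimate, that $u$ is $\Oh(1)$ on $W\setminus\Sigma$, so it suffices to control $u$ on $\Sigma\cap W$. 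And $\|u\|_{H_h^{-N}}\le h^{-N}$ yields the crude microlocal bound $\|Au\|_{L^2}\le Ch^{-N}$ for every $A\in\Psi^{0,0}$ with $\WFh(A)$ compact; this makes $u$ admissible for the calculus and controls all G\aa rding and quasi-orthogonality remainders by a fixed power of $h^{-1}$.

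Next is the propagation step. Under the sign condition $\im\lambda\ge-\Oh(h^\infty)$, semiclassical propagation of singularities for $P-\lambda$ propagates $\Oh(h^{-1})$ estimates forward along $H_p$: control of $u$ at a point, plus control of $f$ along a bicharacteristic segment, gives control of $u$ at the endpoint. Combining the hypotheses --- $f$ is $\Oh(1)$ on $W$ with $\WFh(f)\cap\overline{V}=\emptyset$, and $u$ is $\Oh(h^{-1})$ on $W\cap\Gamma_-\setminus\overline{V}$ --- with the dynamical assumption \eqref{e:dynamicassumption}, namely that every point of $W$ off $\Gamma_+$ meets $W\setminus\overline{V}$ backward and every point off $\Gamma_-$ meets it forward, a compactness argument on $\overline{W}$ propagates $\Oh(h^{-1})$ control of $u$ to all of $W$ except an arbitrarily small but fixed neighborhood of $\Gamma$ together with the part of $\Gamma_+$ near $\Gamma$, which the flow cannot reach from the controlled region. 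It then remains to show that $u$ is $\Oh(h^{-1})$ microlocally near each $\rho_0\in W\cap\Gamma_+\setminus\Gamma$; since only such a local estimate, with constant allowed to depend on $\rho_0$, is claimed, this can be done by an iteration over a shrinking sequence of neighborhoods of $\Gamma$.

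The heart of the argument is the positive commutator estimate for this last statement. Fix $\rho_0$ and a small neighborhood $V'$ of $\Gamma$ with $\overline{V'}\subset V$ and $\rho_0\notin\overline{V'}$. I would construct a commutant $A=\Op(a)$, $a\in C_0^\infty(T^*X)$ supported in $V$, that is identically constant on $V'$ --- so that the full symbol of $\tfrac ih[P,A]$, and with it the commutator remainder, vanish on $V'$ and no error at all is incurred at the trapped set --- and that satisfies $H_pa=b_+^2-b_-^2$ on $V\setminus V'$, with $b_+$ elliptic near $\rho_0$ (in fact on all of $\Gamma_+\cap\{H_pa\ne0\}$, so this contribution can be absorbed) and $b_-$ supported where $u$ is already known to be $\Oh(h^{-1})$: neighborhoods, off $\Gamma_+$, of the backward bicharacteristics issuing from near $\rho_0$ --- these leave $V$ by \eqref{e:dynamicassumption} --- together with neighborhoods of $\Gamma_-\cap W$, of the relevant part of $\partial W$, and of the region controlled at the previous stage of the iteration. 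Pairing $\tfrac ih[P,A]u$ with $u$ and using $(P-\lambda)u=f$, the self-adjointness of $P$, and $\im\lambda\ge-\Oh(h^\infty)$, the only surviving terms are the $\Oh(h)$ commutator remainder $h\langle Ru,u\rangle$ with $R\in\Psi^{0,0}$ microsupported in $\supp a\setminus V'$ --- indeed the pairing $\langle Au,f\rangle$ is $\Oh(h^\infty)$ since $\supp a\subset V$ while $\WFh(f)\cap\overline{V}=\emptyset$ --- so the symbol calculus yields
\[
\|B_+u\|^2\le C\|B_-u\|^2+Ch\|B_+u\|^2+Ch\|\chi u\|^2+\Oh(h^\infty),
\]
with $B_-$ and $\chi$ microsupported in the controlled region. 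Absorbing $Ch\|B_+u\|^2$ for small $h$ and using $\|B_-u\|,\|\chi u\|=\Oh(h^{-1})$ gives $\|B_+u\|=\Oh(h^{-1})$, i.e.\ $u$ is $\Oh(h^{-1})$ near $\rho_0$; one further application of forward propagation then spreads this over all of $W\cap\Gamma_+\setminus\Gamma$.

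I expect the main obstacle to be the construction of $a$, together with the bookkeeping of the iteration: one must arrange that $\{H_pa<0\}\cap\overline{W}$ lies in the region already controlled --- by propagation or at the previous iteration stage --- while $a$ is constant near $\Gamma$, so that the polynomial loss in the a priori bound $\|u\|_{H_h^{-N}}\le h^{-N}$ never propagates into the final estimate, and while $b_+$ remains elliptic at $\rho_0$. This is where the dynamical assumption \eqref{e:dynamicassumption} and the structure of the $H_p$-flow near $\Gamma_+\setminus\Gamma$ enter essentially; everything else is routine semiclassical microlocal analysis.
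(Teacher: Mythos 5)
Your overall plan is essentially the one the paper follows (in \cite{Datchev-Vasy:Trapped}, whose proof this paper summarizes and modifies): elliptic reduction off $\Sigma$, forward propagation of $\Oh(h^{-1})$ control from $\Gamma_-\setminus\overline V$ to a neighborhood $U\subset V$ of $\Gamma$ minus $\Gamma_+$ (their Lemma~4.1), a commutant constant near $\Gamma$ so that $\WFh(f)\cap\overline V=\emptyset$ kills the inhomogeneous pairing up to $\Oh(h^\infty)$, and an iteration to beat down the crude $h^{-N}$ a priori bound. The key escrow construction you defer (your function $a$) is their Lemma~4.3, which produces $q\in C_0^\infty(U)$, $q\equiv 1$ near $\Gamma$, $H_pq\le 0$ near $\Gamma_+$ and $H_pq<0$ on $\Gamma_+^{\overline{U_0}}\setminus U_1$, with $\sqrt q$ and $\sqrt{-H_pq}$ smooth near $\Gamma_+$; the commutator is then run with $Q^*Q$, $Q=\Op(q)$, giving $\frac ih[P,Q^*Q]=-B^*B+E+hF$ with $E$ microsupported off $\Gamma_+$ and $F$ in $\supp dq$.

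Two details need repair, though neither is a conceptual obstruction. First, you have the sign backwards in the commutant identity: with $\im\lambda\ge-\Oh(h^\infty)$ and $A\ge0$ (which is why the paper uses $Q^*Q$ rather than a general self-adjoint $A$), the usable inequality is $\langle\frac ih[P,A]u,u\rangle\ge-\Oh(h^\infty)$, so the term that gets \emph{bounded} must appear with a minus sign in $H_p a$, i.e.\ you need $H_pa\le 0$ along $\Gamma_+$ and $H_pa<0$ at $\rho_0$ (your $b_+^2-b_-^2$ with $b_+$ elliptic at $\rho_0$, and your requirement that $\{H_pa<0\}\cap\overline W$ lie in the controlled region, both have the sign reversed and as written the estimate would bound the controlled term by the uncontrolled one). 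Second, your phrase ``iteration over a shrinking sequence of neighborhoods of $\Gamma$'' misdescribes the mechanism: the neighborhoods $U_1\Subset U_0\Subset U$ and hence $q$ stay fixed, and the iteration is on the $h$-power — $u=\Oh(h^k)$ on a fixed compact subset of $U\cap\Gamma_+\setminus\Gamma$ is upgraded to $\Oh(h^{k+1/2})$ on $\Gamma_+^{\overline{U_0}}\setminus U_1$, because the remainder $h\langle Fu,u\rangle$ only contributes $\Oh(h^{2k+1})$ there, and one iterates from $k=-N$ until saturating at the $\Oh(h^{-2})$ coming from $\langle Eu,u\rangle$. Shrinking the inner neighborhood at each step would reintroduce uncontrolled $\Gamma_+$ points into $\supp dq$ and the induction would not close.
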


Here we say that $u$ is $\Oh(a(h))$ at  $\rho \in T^*X$ if there exists  $B \in \Psi^{0,0}(X)$ elliptic at $\rho$ with  $\|Bu\|_{L^2} = \Oh(a(h))$. We say  $u$ is $\Oh(a(h))$ on a set $E \subset T^*X$ if it is $\Oh(a(h)))$ at each  $\rho \in E$.

Note that there is no conclusion on $u$ at $\Gamma$; typically it will be merely $\Oh(h^{-N})$ there. However, to obtain $\Oh(h^{-1})$ bounds for $u$ on $\Gamma_+$ we only needed to assume $\Oh(h^{-1})$ bounds for $u$ on $\Gamma_-$ and nowhere else.  Note also that by the propagation of singularities, if $u$ is $\Oh(h^{-1})$ at one point on any bicharacteristic, then it is such on the whole forward bicharacteristic. If $|\im \lambda| = \Oh(h^\infty)$ then the same is true for backward bicharacteristics.

In this paper we show that a (lesser) improvement on the a priori bound holds even when $f$ is not assumed to vanish microlocally near $\Gamma$:

\begin{thm}\label{thm:through-trapped2}
Suppose that $\|u\|_{H_h^{-N}} \le h^{-N}$ for some $N \in \N$ and $(P-\lambda)u=f$, $\re \lambda \in I$ and $\im \lambda \ge -\Oh(h^\infty)$. Suppose $f$ is $\Oh(1)$  on $W$, $u$ is $\Oh(a(h)h^{-1})$  on $W$, and  $u$ is  $\Oh(h^{-1})$  on $W \cap \Gamma_- \setminus \overline{V}$.
Then $u$ is $\Oh(\sqrt{a(h)}h^{-1})$  on $W\cap \Gamma_+ \setminus \Gamma$.
\end{thm}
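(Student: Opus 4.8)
The plan is to rerun the positive commutator argument that proves Theorem~\ref{thm:through-trapped}, changing only the way in which one term is estimated. Recall that that proof furnishes, for each point of $W\cap\Gamma_+\setminus\Gamma$, a self adjoint commutant $A=\Op(a)$ with $a$ supported in $W$ and \emph{away from $\Gamma$}, together with operators $B$, elliptic at the point in question, and $E$, microlocally supported in the region of $W$ where $u$ is $\Oh(h^{-1})$ by the hypotheses and propagation (in particular in $W\cap\Gamma_-\setminus\overline V$), such that, modulo a lower order term microsupported on $\supp a$ and an $\Oh(h^\infty)$ error, $\tfrac{i}{h}[P,A]\le -B^*B+E^*E$. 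The essential structural point inherited from \cite{Datchev-Vasy:Trapped} is that, since $a$ is supported away from $\Gamma$, none of $B$, $E$, or the remainder sees $\Gamma$, so the absence of any a priori control on $u$ at $\Gamma$ never intervenes.

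Pairing the commutator with $u$ and using $(P-\lambda)u=f$, $P=P^*$, $\im\lambda\ge-\Oh(h^\infty)$, and $\|u\|_{H_h^{-N}}\le h^{-N}$ exactly as in \cite{Datchev-Vasy:Trapped} gives
\begin{equation*}
\|Bu\|^2\le\|Eu\|^2+\tfrac{2}{h}\big|\im\langle Au,f\rangle\big|+(\text{lower order}).
\end{equation*}
Here $\|Eu\|^2=\Oh(h^{-2})$ by the $\Oh(h^{-1})$ hypotheses on $u$, and the lower order terms are absorbed by the covering and iteration-in-flow-time procedure of \cite{Datchev-Vasy:Trapped} (using, on $\supp a$, the bound $\Oh(\sqrt{a(h)}h^{-1})$ that one is propagating). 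The only term treated differently is $\tfrac1h|\im\langle Au,f\rangle|$. Because $\supp a\subset W$, the hypothesis that $f$ is $\Oh(1)$ on $W$ gives $\|\Op(\psi)f\|=\Oh(1)$ for any $\psi$ supported in $W$ with $\psi\equiv1$ on $\supp a$, while the hypothesis that $u$ is $\Oh(a(h)h^{-1})$ on $W$ gives $\|Au\|=\Oh(a(h)h^{-1})$; inserting $\Op(\psi)$ and discarding an $\Oh(h^\infty)$ error,
\begin{equation*}
\tfrac1h\big|\langle Au,f\rangle\big|\le\tfrac1h\|Au\|\,\|\Op(\psi)f\|+\Oh(h^\infty)=\Oh\big(a(h)h^{-2}\big).
\end{equation*}
Thus $\|Bu\|^2=\Oh(h^{-2})+\Oh(a(h)h^{-2})=\Oh(a(h)h^{-2})$ (we take $a(h)\ge1$, the only case of interest), i.e.\ $\|Bu\|=\Oh(\sqrt{a(h)}h^{-1})$; since $B$ was elliptic at the given point of $W\cap\Gamma_+\setminus\Gamma$, this is the claim.

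The main obstacle is bookkeeping rather than a new idea: one must ensure that the a priori bound enters the final inequality only \emph{linearly}, through the bilinear quantity $\langle Au,f\rangle$ (linear in $u$ and in $f$), and never through a quadratic quantity $\langle(\,\cdot\,)u,u\rangle$, which would be $\Oh(a(h)^2h^{-2})$ and would destroy the square-root gain. This is exactly why it is important that the escape function of \cite{Datchev-Vasy:Trapped} is supported away from $\Gamma$ — so that no positive error term microsupported near $\Gamma$ occurs — and why the genuine lower order remainder must be absorbed using the bound being propagated rather than the cruder $\Oh(a(h)h^{-1})$; this in turn forces the argument, like that of Theorem~\ref{thm:through-trapped}, to be organized as an iteration along the flow, with the hypothesis $u=\Oh(a(h)h^{-1})$ on $W$ used only to start the iteration and to bound $\langle Au,f\rangle$.
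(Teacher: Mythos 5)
Your proposal takes essentially the same route as the paper: rerun the positive commutator argument of \cite[Theorem 1.3]{Datchev-Vasy:Trapped}, and bound the one new term --- the bilinear pairing of the commutant with $f$ and $u$ --- by $\Oh(1)\cdot\Oh(a(h)h^{-1})\cdot h^{-1}=\Oh(a(h)h^{-2})$, which is exactly the paper's estimate $2h^{-1}\|Q(P-\lambda)u\|\,\|Qu\|\le Ch^{-2}a(h)$. Your closing observation, that the square-root gain survives precisely because the a priori bound enters only through a term linear in $u$ rather than quadratic, is the correct key point and matches the paper's structure (the quadratic terms $\la Eu,u\ra$, $\la Fu,u\ra$ are microsupported away from $\Gamma$).

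One description in the proposal is inverted and worth correcting. You repeatedly assert that the commutant symbol $a$ is ``supported away from $\Gamma$,'' and attribute the favorable structure to this. In the paper (as in \cite[Lemma 4.3]{Datchev-Vasy:Trapped}) the escape function $q$ is identically $1$ near $\Gamma$ and compactly supported in $U$; it is $dq$, hence $H_pq$ and hence the symbols $b$, $e$, and the remainder $F$, that are supported in $\supp dq\subset U\setminus\Gamma$. The distinction is not cosmetic: if $a=q^2$ really vanished near $\Gamma$, then on $\supp a$ one would have the $\Oh(h^{-1})$ bound on $u$ and $\|Au\|$ would be $\Oh(h^{-1})$, yielding the false conclusion that $u$ is $\Oh(h^{-1})$ on $\Gamma_+\setminus\Gamma$ with no $\sqrt{a(h)}$ loss at all. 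Your own computation --- $\|Au\|=\Oh(a(h)h^{-1})$, invoking the hypothesis on all of $W$, which includes a neighborhood of $\Gamma$ --- is in fact consistent with the correct choice $q\equiv1$ near $\Gamma$, so the argument you carry out is sound; only the explanatory remark is wrong. Relatedly, the phrase ``the absence of any a priori control on $u$ at $\Gamma$ never intervenes'' describes Theorem~\ref{thm:through-trapped}, not the present theorem: here the a priori $\Oh(a(h)h^{-1})$ control at $\Gamma$ is indispensable and is used, via $\|Qu\|$, exactly once. Finally, the paper makes the ``iteration'' precise as an induction in the exponent $k$ --- from $u=\Oh(h^k)$ on a compact subset of $U\cap\Gamma_+\setminus\Gamma$ deduce $\Oh(h^{k+1/2})$ on $\Gamma_+^{\overline{U_0}}\setminus U_1$, terminating when $\sqrt{a(h)}h^{-1}\le Ch^{k+1/2}$ --- which is what you gesture at with ``covering and iteration-in-flow-time''; spelling this out removes the only real vagueness in your write-up.
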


In \cite{Datchev-Vasy:Trapped} Theorem~\ref{t:first} is deduced from Theorem~\ref{thm:through-trapped}. Theorem~\ref{t:first2} follows from Theorem~\ref{thm:through-trapped2}  by the same argument.

\begin{proof}[Proof of Theorem~\ref{thm:through-trapped2}]
The argument is a simple modification of the argument of \cite[End of
Section~4, Proof of Theorem~1.3]{Datchev-Vasy:Trapped}; we follow the
notation of this proof. Recall first from
\cite[Lemma~4.1]{Datchev-Vasy:Trapped} 
that if $U_-$ is a neighborhood of $(\Gamma_-\setminus\Gamma)
\cap(\overline{W}\setminus V)$ then
there is a neighborhood $U \subset V$ of $\Gamma$ such that if
$\alpha\in U\setminus\Gamma_+$ then the
backward bicharacteristic from $\alpha$ enters $U_-$.
Thus, if one assumes that $u$ is $\Oh(h^{-1})$ on $\Gamma_-$ and $f$ is $\Oh(1)$ on $\overline V$, it follows that that $u$ is $\Oh(h^{-1})$ on $U \setminus \Gamma_+$, provided $U_-$ is chosen small enough that $u$ is $\Oh(h^{-1})$ on $U_-$. Note also that, because $U \subset V$, $f$ is $\Oh(1)$ on $U$. We will show that $u$ is  $\Oh(\sqrt{a(h)}h^{-1})$  on $U\cap \Gamma_+ \setminus \Gamma$: the conclusion on the larger set $W\cap \Gamma_+ \setminus \Gamma$ follows by propagation of singularities.

Next, \cite[Lemma~4.3]{Datchev-Vasy:Trapped} states that if $U_1$ and $U_0$ are open sets with $\Gamma \subset U_1 \Subset U_0 \Subset U$ then there exists a nonnegative function $q \in C_0^\infty(U)$ such that 
\[q = 1 \textrm{ near } \Gamma, \qquad H_p q \le 0 \textrm{ near } \Gamma_+, \qquad H_p q< 0 \textrm{ on } \Gamma_+^{\overline{U_0}} \setminus U_1.\]
Moreover, we can take $q$ such that both $\sqrt q$ and $\sqrt{-H_pq}$ are smooth near $\Gamma_+$. 

\textbf{Remark.} The last paragraph in the proof of  \cite[Lemma~4.3]{Datchev-Vasy:Trapped} should be replaced by the following: To make $\sqrt{-H_p \tilde q}$ smooth, let $\psi(s)=0$ for $s\leq 0$, $\psi(s)=e^{-1/s}$ for $s>0$, and assume as we may that $U_\rho\cap\Sc_\rho$ is a ball with respect to a Euclidean metric (in local coordinates near $\rho$) of radius $r_\rho>0$ around $\rho$. We then choose $\varphi_{\rho}$ to behave like $\psi({r'_\rho}^2-|.|^2)$ with $r'_\rho<r_\rho$ for $|.|$ close to $r'_\rho$, bounded away from $0$ for smaller values of $|.|$, and choose $-\chi'_{\rho}$ to vanish like $\psi$ at the boundary of its support. That  sums of products of such functions have smooth square roots follows from \cite[Lemma 24.4.8]{h}.

The proof of Theorem~\ref{thm:through-trapped2} proceeds by induction: we show that if $u$ is $\Oh(h^k)$ on a sufficiently large compact subset of $U \cap\Gamma_+ \setminus \Gamma$, then $u$ is $\Oh(h^{k + 1/2})$ on $\Gamma_+^{\overline U_0} \setminus U_1$, provided $\sqrt{a(h)}h^{-1} \le C h^{k+ 1/2}$.

Now let $U_-$ be an open neighborhood of $\Gamma_+ \cap \supp q$ which is sufficiently small that $H_pq\le0$ on $U_-$ and that $\sqrt{-H_pq}$ is smooth on $U_-$. Let $U_+$ be an open neighborhood of $\supp q \setminus U_-$ whose closure is disjoint from $\Gamma_+$ and from $T^*X \setminus \overline{U}$. Define $\phi_\pm \in C^\infty(U_+ \cup U_-)$ with $\supp \phi_\pm \subset U_\pm$ and with $\phi_+^2 + \phi_-^2 = 1$ near $\supp q$.

Put
\[b \Def \phi_- \sqrt{-H_pq^2}, \qquad e \Def \phi_+^2 H_pq^2.\]
Let $Q,B,E \in \Psi^{-\infty,0}(X)$ have principal symbols $q,b,e$, and microsupports $\supp q$, $\supp b$, $\supp e$, so that
\[\frac i h [P,Q^*Q] = - B^*B + E + hF,\]
with $F \in \Psi^{-\infty,0}(X)$ such that $\WF_h'F \subset \supp dq \subset U \setminus \Gamma$. But
\begin{equation*}\begin{split}
&\frac i h \la [P,Q^*Q] u, u \ra = \frac 2 h \im \la Q^*Q(P - \lambda)u, u\ra +\frac 2 h \la Q^*Q \im \lambda u,  u \ra \\
&\ge -2h^{-1}\|Q(P - \lambda)u\|\,\|Qu\|- \Oh(h^\infty) \|u\|^2\geq -Ch^{-2}a(h) -\Oh(h^\infty),
\end{split}\end{equation*}
where we used $\im \lambda \ge -\Oh(h^\infty)$ and that on $\supp q$, $(P-\lambda)u$ is $\Oh(1)$. So
\[\|Bu\|^2 \le \la Eu,u \ra + h \la Fu, u \ra + Ch^{-2} a(h)+\Oh(h^\infty).\]
But $|\la Eu,u \ra| \le Ch^{-2}$ because $\WF'_hE \cap \Gamma_+ = \emptyset$
gives that $u$ is $\Oh(h^{-1})$ on $\WF'_hE$ by the first paragraph of the proof. Meanwhile $|\la Fu, u \ra| \le C (h^{-2} + h^{2k})$ because all points of $\WF'_hF$ are either in $U \backslash \Gamma_+$, where we know $u$ is $\Oh(h^{-1})$ from the first paragraph of the proof, or on a single compact subset of $U \cap \Gamma_+ \setminus \Gamma$, where we know that $u$ is $\Oh(h^k)$ by inductive hypothesis. Since $b= \sqrt{-H_pq^2}>0$ on $\Gamma_+^{\overline{U_0}}\setminus U_1$, we can use microlocal elliptic regularity to conclude that $u$ is $\Oh(h^{k + 1/2})$ on $\Gamma_+^{\overline U_0} \setminus U_1$, as desired.
\end{proof}

\end{document}